\documentclass{IEEEtran}

\usepackage{cite}
\usepackage{amsmath,amssymb,amsfonts,amsthm,mathtools}
\usepackage[hidelinks]{hyperref}
\usepackage[nameinlink,noabbrev]{cleveref}

\def\BibTeX{{\rm B\kern-.05em{\sc i\kern-.025em b}\kern-.08em
    T\kern-.1667em\lower.7ex\hbox{E}\kern-.125emX}}

\newtheorem{theorem}{Theorem}
\newtheorem{corollary}[theorem]{Corollary}
\newtheorem{proposition}[theorem]{Proposition}
\newtheorem{lemma}[theorem]{Lemma}
\theoremstyle{remark}

\newcommand{\R}{\mathbb{R}}
\newcommand{\Q}{\mathbb{Q}}
\newcommand{\B}{\mathbb{B}}
\newcommand{\sat}{\operatorname{sat}}

\title{The Global Asymptotic Stability Problem for Linear MPC Is Undecidable}
\author{Johan Löfberg%
\thanks{This work was supported by the Swedish Government Agency VINNOVA
through the SEDDIT competence center program.}%
\thanks{Johan Löfberg is with the Division of Automatic Control, Department
of Electrical Engineering, Linköping University, SE-581 83 Linköping,
Sweden (e-mail: johan.lofberg@liu.se).}}

\begin{document}
\maketitle

\begin{abstract}
We prove that deciding global asymptotic stability for constrained finite-horizon linear model predictive control is undecidable. This holds at horizon one with identity state, input, and terminal weights, unique optimizers, and global feasibility. Separate reductions cover predicted-state boxes, hard input boxes, and quadratically softened input boxes. A fourth reduction fixes the state and input dimensions to three and six. Hence undecidability is not caused by long horizons, growing dimensions, failures of recursive feasibility, or nonuniqueness.
\end{abstract}

\begin{IEEEkeywords}
Computational complexity, model predictive control, piecewise-affine systems,
stability analysis, undecidability.
\end{IEEEkeywords}

\section{Introduction}

Model predictive control is among the most successful modern control methodologies, with significant use in industrial process control and an expanding range of other applications \cite{GarciaEtAl1989,QinBadgwell2003,RawlingsMayneDiehl2024}.  Its appeal lies in its ability to handle multivariable dynamics and constraints directly within an online optimization problem.  A somewhat disturbing fact, however, is that closed-loop stability is not intrinsic to the basic finite-horizon MPC formulation.  It must either be imposed through deliberate controller tuning and additional design ingredients, such as terminal constraints, terminal costs, or suitably chosen local control laws \cite{MayneEtAl2000,ScokaertEtAl1999}, guaranteed without terminal ingredients by choosing a sufficiently long prediction horizon under appropriate controllability conditions \cite{Gruene2012}, or established a posteriori for the resulting receding-horizon law.  Such analysis may work directly with the optimization-based controller \cite{Primbs2001,KordaJones2017,Lofberg2012,SimonLofberg2016}, or first derive its explicit piecewise-affine representation \cite{BemporadEtAl2002} and then search for Lyapunov or invariant-set certificates for the resulting closed-loop dynamics \cite{FerrariTrecateEtAl2002,HovdOlaru2010,RubagottiEtAl2011,GroffEtAl2023}.  Since the controller is implemented in feedback, with the optimization resolved at every sampling instant, this is a question about the global dynamics generated jointly by the plant, feasible set, and active constraints.  In particular, do all feasible trajectories remain admissible and converge to the desired equilibrium?

More precisely, consider regulation of the discrete-time linear plant
\[
  x_{k+1}=F x_k+G u_k.
\]
The variables are expressed in deviation coordinates about the controlled equilibrium $(x_k,u_k)=(0,0)$, so the control objective is convergence to the origin.
Given a horizon $N$, let $\mathbf{x}=(x_{0|k},\ldots,x_{N|k})$ and
$\mathbf{u}=(u_{0|k},\ldots,u_{N-1|k})$ denote the predicted state and input
sequences.  The standard quadratic MPC controller considered here solves
\begin{subequations}
\label{eq:standard-mpc}
\begin{align}
\min_{\mathbf{x},\mathbf{u}}\quad&
\sum_{i=0}^{N-1}
\left(x_{i|k}^{\mathsf T}Qx_{i|k}
+u_{i|k}^{\mathsf T}Ru_{i|k}\right)
+x_{N|k}^{\mathsf T}Px_{N|k},\\
\text{subject to}\quad&
 x_{0|k}=x_k,\\
&x_{i+1|k}=Fx_{i|k}+Gu_{i|k},
\qquad i=0,\ldots,N-1,\\
&(x_{i|k},u_{i|k})\in\mathcal Z,
\qquad i=0,\ldots,N-1,\\
&x_{N|k}\in\mathcal X_f,
\end{align}
\end{subequations}
where $Q\succeq0$, $R\succ0$, and $P\succeq0$, while $\mathcal Z$ and $\mathcal X_f$ are polyhedral sets containing $(0,0)$ and $0$, respectively; selected state, input, or terminal constraints may be absent.  The receding-horizon law is $\kappa_N(x_k)=u_{0|k}^{\star}(x_k)$, and the implemented closed loop is
\[
  x_{k+1}=Fx_k+G\kappa_N(x_k).
\]
We denote by $\mathcal X_N$ the set of states for which \eqref{eq:standard-mpc} is feasible.  Softened MPC problems, obtained by adding nonnegative slack variables to selected inequalities and penalizing them in the objective, are also considered below.  For such an instance, the rational matrices defining the slack incidence and the rational quadratic slack penalty are included in its finite encoding.

Since $R\succ0$ and the dynamics determine the states from the inputs, the
hard-constrained optimizer is unique; the positive-definite slack penalty gives
the same property in the softened construction.

For linear MPC with quadratic cost and polyhedral constraints, the receding-horizon law is continuous piecewise affine under the assumptions used here \cite{BemporadEtAl2002}.  This makes local analysis and region-wise calculation natural, but it does not by itself settle global behavior across switching surfaces.  We therefore consider the following decision problem.  An MPC instance is specified by a finite horizon and finitely many rational matrices and vectors defining the plant, quadratic cost, polyhedral inequalities, and, when present, slack variables and their penalties; the state and input dimensions are part of the instance and may vary.  Given such an instance, decide whether the origin is asymptotically stable relative to the feasible set $\mathcal X_N$.  Here, asymptotic stability relative to $\mathcal X_N$ means that every closed-loop trajectory initialized in $\mathcal X_N$ remains in $\mathcal X_N$ and converges to the origin, and that the origin is Lyapunov stable relative to $\mathcal X_N$.

The main result of this paper rigorously confirms the negative conclusion intuitively suggested by the broader piecewise-affine literature: the global asymptotic-stability decision problem for constrained finite-horizon linear MPC is undecidable.  In fact, undecidability persists even when the horizon is fixed to $N=1$ and the quadratic weights to $Q=R=P=I$.  In a separate fixed-dimensional reduction, both the state and input dimensions are constants, namely $n_x=3$ and $n_u=6$; the finite description of the encoded computation then appears only in the number and coefficients of the mixed polyhedral inequalities.

\section{A known undecidable stability problem}

Undecidability of stability is already known for general piecewise-affine dynamics; see \cite{BlondelTsitsiklis2000} for a broader survey of complexity results in systems and control.  Blondel, Bournez, Koiran, Papadimitriou, and Tsitsiklis proved undecidability for rational piecewise-affine discrete-time systems in a class that permits discontinuous maps, already in dimension two \cite{BlondelEtAlPWA2001}.  More strongly, Blondel, Bournez, Koiran, and Tsitsiklis proved undecidability for continuous piecewise-affine systems in dimension three and for the narrower class of saturated linear systems used below \cite{BlondelEtAl2001}.  Thus continuity alone does not restore decidability.

Although these results make undecidability for linear MPC highly plausible and expected, they do not establish it.  MPC closed-loop maps form a very particular structured subclass of continuous piecewise-affine maps: the feedback must be the optimizer of a finite-horizon quadratic program whose parameter dependence and objective are tied to the prediction model.  An undecidable problem on a larger class may become decidable after restriction to such a subclass.  Equivalently, the familiar implication $\text{linear MPC}\Longrightarrow\text{piecewise-affine closed loop}$ has the wrong direction for transferring undecidability.  What is needed is a stability-preserving construction in the reverse direction, from a known undecidable \emph{source problem} to MPC.

A related reduction has been given for linear plants controlled by rational ReLU networks: Korda observed that deciding asymptotic stability remains undecidable when saturation is represented by a small ReLU network \cite{Korda2022}.  Korda's construction does not directly yield the MPC results established here: it represents saturation by a ReLU network rather than as the optimizer of a plant-consistent finite-horizon quadratic program.  The contribution here is a family of explicit reductions that realize the undecidable dynamics within distinct MPC subclasses, allowing predicted-state, hard-input, and soft-input formulations to be studied separately.

The source problem for our reduction is primarily global asymptotic stability of saturated linear systems studied by Blondel et al.  Let $n$ be a positive integer and let $A\in\Q^{n\times n}$.  The state is $x_k\in\R^n$.  For any vector $v\in\R^n$, define componentwise saturation by
\[
  \sat(v)_i =
  \begin{cases}
    -1, & v_i<-1,\\
    v_i, & -1\le v_i\le 1,\\
    1, & v_i>1.
  \end{cases}
\]
The associated saturated linear system is
\begin{equation}
    x_{k+1}=\sat(Ax_k),
    \qquad x_0\in\R^n.
    \label{eq:saturated-system}
\end{equation}

\begin{theorem}[Saturated linear system source \cite{BlondelEtAl2001}]
\label{thm:saturated-source}
Given a rational matrix $A$, it is undecidable whether the origin of \eqref{eq:saturated-system} is globally asymptotically stable on $\R^n$.
\end{theorem}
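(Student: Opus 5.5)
This theorem is cited from \cite{BlondelEtAl2001}. In the paper it serves as the source problem, so nothing new needs proving; still, here is how I would reconstruct the argument. The plan is to reduce the halting problem for a Turing-complete model to global asymptotic stability of \eqref{eq:saturated-system}. The natural intermediate is Siegelmann--Sontag's simulation of Turing machines by saturated linear recurrent networks. In that simulation, a machine configuration is encoded by finitely many rational coordinates: stack contents are written as Cantor-like base-$4$ expansions $\sum_i (2b_i+1)4^{-i}$, and control-state and flag units take values in $\{0,1\}$. One computation step is realized by a fixed number of iterations of $x\mapsto \sat(Ax)$ with rational $A$.

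\textbf{Step 1 (simulation).} I would fix a universal machine, or a two-counter machine. I would then build, effectively from a machine $M$, a rational matrix $A_M$ such that the orbit of a designated initial encoding $x^{(0)}$ tracks the computation of $M$. The construction should also ensure that halting drives the orbit into a configuration from which a cleanup mechanism brings every coordinate to $0$.

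\textbf{Step 2 (make halting equivalent to global stability).} This is the key reversal, and I expect it to be the main obstacle. Siegelmann--Sontag only control orbits starting at valid encodings, whereas global asymptotic stability quantifies over all of $\R^n$. I would change the reduction so that it starts from a problem where every initial condition is meaningful. One option is the mortality problem: does a machine halt from every configuration, including non-reachable ones? Mortality for counter machines is undecidable (Hooper, Blondel et al.). A second option is to add an ``error'' mechanism: each simulation step includes saturated checks that detect invalid encodings, and a detection triggers a contraction to $0$.

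The hard part is guaranteeing that \emph{every} real vector either decays or encodes a genuine configuration. Saturation already forces $x_1\in[-1,1]^n$. For coordinates that are not valid encodings, I would design gadgets that push the state into a region where a nilpotent-like linear part, protected by saturation, sends it to $0$ in finitely many steps. With this in place, the origin is globally asymptotically stable if and only if every configuration is mortal.

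\textbf{Step 3 (Lyapunov stability and conclusion).} Lyapunov stability needs a neighborhood of $0$ on which saturation is inactive and $A$ acts as a nilpotent or contracting map. I would arrange this by scaling, so that small states avoid all simulation thresholds. Attractivity then follows from Step 2. Since the map $M\mapsto A_M$ is computable, a decider for global asymptotic stability would decide mortality, which is a contradiction.
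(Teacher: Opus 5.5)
In the paper this theorem is not proved; it is imported from \cite{BlondelEtAl2001}, so your opening remark is all the paper does. Your reconstruction points in a reasonable direction: a reduction from a machine mortality problem through a Siegelmann--Sontag-type simulation. As a proof, however, it has two genuine gaps, and they sit exactly where the substance of the cited work lies.

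The first gap is that Step~3 fails as stated. Making small states avoid the simulation thresholds happens automatically, and it does not help: it only means that near the origin the map is exactly $x\mapsto Ax$. Hence global asymptotic stability of \eqref{eq:saturated-system} forces $\rho(A)<1$. Moreover, your Step~1 has a mortal machine drive every trajectory to $0$ in finite time. In that case the small trajectories $A^kx$ force $A$ to be nilpotent. No rescaling of coordinates changes the spectrum of $A$. The natural way to write a biased Siegelmann--Sontag network in the homogeneous form $\sat(Ax)$ is to add a self-sustaining constant unit, and that unit contributes the eigenvalue $1$. Stack pops of the form $q\mapsto 4q-\cdots$ add high-gain loops as well. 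So you need a simulation whose linear part is nilpotent from the outset, with all constants supplied by the control-state units, while the saturated dynamics still computes. That is a missing construction, not a tuning detail.

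The second gap is that Step~2 asserts the crucial dichotomy, that every vector either decays or encodes a genuine configuration, but gives no mechanism for it. The mechanism you suggest cannot work literally. The map $x\mapsto\sat(Ax)$ is continuous on the connected box $[-1,1]^n$, so no gadget can implement a sharp switch of the form ``invalid, hence kill.'' Around valid encodings lies a continuum of states with fractional control or flag values, or with stack values in the Cantor gaps. The detection gadgets themselves create further such states. You must prove that none of these states carries a non-convergent orbit. This is not automatic: the nilpotent matrix with rows $(2,-4)$ and $(1,-2)$ already gives $\sat(Ax)$ the spurious $2$-cycle $\pm(1,1)$. The natural tool is the uniform halting bound that mortality yields by compactness. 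For that argument, and because points of the Cantor set encode infinite stacks, the source problem must be mortality over \emph{all} configurations, including those with infinitely many nonblank symbols. Mortality over finite configurations alone would break the implication ``machine mortal $\Rightarrow$ system globally asymptotically stable.'' Within this paper, citing \cite{BlondelEtAl2001}, as you note at the outset, is all that is required.
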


We will also use the following origin-fixing form of
\cite[Theorem~7.1]{BlondelEtAl2001}.

\begin{theorem}[Fixed-dimensional continuous PWA source]
\label{thm:fixed-pwa-source}
Given a rational continuous piecewise-affine map
$\phi:\R^3\to\R^3$ satisfying $\phi(0)=0$, it is undecidable whether the
origin of $x_{k+1}=\phi(x_k)$ is globally asymptotically stable.
\end{theorem}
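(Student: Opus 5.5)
The plan is a many-one reduction from \cite[Theorem~7.1]{BlondelEtAl2001}, which asserts undecidability of global asymptotic stability (or global convergence to an equilibrium) for rational continuous piecewise-affine maps $\psi:\R^3\to\R^3$. The statement above only adds the normalization $\phi(0)=0$. I would therefore build a computable map $\psi\mapsto\phi$ that preserves the answer and always outputs a map fixing the origin. Undecidability of the restricted problem then follows: a decision procedure for it, composed with this reduction, would decide the source problem.

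First step. If the cited theorem is phrased relative to a given rational equilibrium $x^*$ (an equilibrium specified as part of the instance), conjugate by a translation and set $\phi(x)=\psi(x+x^*)-x^*$.
\begin{itemize}
\item $\phi$ is again continuous and piecewise affine with rational data; its region inequalities and affine pieces are obtained from those of $\psi$ by exact rational arithmetic.
\item $\phi(0)=\psi(x^*)-x^*=0$.
\item Translation is a homeomorphism carrying trajectories to trajectories, so Lyapunov stability and attractivity transfer verbatim.
\end{itemize}

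Second step. If instead the cited theorem is phrased as convergence of all trajectories to the origin without assuming $\psi(0)=0$, first evaluate $\psi(0)$ exactly; this is decidable, since it only requires locating $0$ in one of the finitely many rational polyhedral regions.
\begin{itemize}
\item If $\psi(0)\neq0$, the origin cannot be globally attracting. Indeed, if $\psi^k(0)\to0$, continuity gives $\psi(0)=\lim_k\psi(\psi^k(0))=\lim_k\psi^{k+1}(0)=0$, a contradiction. So the answer is ``no''. In this case I would output a fixed unstable instance, e.g.\ $\phi(x)=2x$.
\item If $\psi(0)=0$, output $\phi=\psi$ unchanged.
\end{itemize}
In both cases $\phi(0)=0$ and the answers agree.

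The main obstacle is matching the stability notions. The cited result may concern only global attractivity, whereas the statement above asks for global asymptotic stability, which also includes Lyapunov stability. To close this gap, I would inspect the Blondel et al.\ construction: it simulates a counter or Turing machine, and near the origin the map is, by design, linear and contracting (or identically zero on a neighbourhood). This local structure gives Lyapunov stability for free whenever the origin is globally attracting. If the construction does not give this directly, I would modify it by composing with a saturation-type piecewise-affine contraction that equals $\tfrac12 x$ on a small rational box around the origin. This modification must preserve both the attracting and the non-attracting cases of the encoded computation, and checking that is the delicate part of the argument. With this in place, ``globally attracting'' and ``globally asymptotically stable'' coincide on the image of the reduction, and the theorem follows.
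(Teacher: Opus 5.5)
Your proposal is correct and essentially matches the paper. The paper likewise observes that $\phi(0)=0$ is necessary for asymptotic stability of the origin and exactly checkable from a rational polyhedral encoding, so maps not fixing the origin are immediate no-instances and the restriction costs nothing. Your translation branch and your attractivity-versus-stability worry are unnecessary, because \cite[Theorem~7.1]{BlondelEtAl2001} already asserts undecidability of global asymptotic stability for rational continuous PWA maps on $\R^3$; the unverified ``compose with a local contraction'' fallback, which would in general alter the global dynamics, is therefore never needed.
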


We encode a rational PWA map by a finite rational polyhedral partition and
rational affine pieces.  The hard instances in
\cite[Theorem~7.1]{BlondelEtAl2001} have such descriptions.

Since $\phi(0)=0$ is necessary for asymptotic stability of the origin and is
exactly checkable for a rational PWA map, the source problem may without loss
of generality be restricted to maps fixing the origin; all other maps are
immediate no-instances.

\section{The MPC stability decision problem is undecidable}

We now construct MPC problems whose receding-horizon closed loops reproduce the saturated system \eqref{eq:saturated-system} exactly, either directly or as an autonomous component.  Each construction maps a rational matrix $A$ to a rational horizon-one MPC instance using only fixed rational operations.  Hence the map is effective (indeed, polynomial-time) under standard binary encoding of rational data. To isolate what does and does not contribute to undecidability, we derive three alternative reductions to instances of the MPC stability decision problem. One uses predicted-state constraints only, one uses input constraints only, and one uses quadratically softened input constraints. Together, the constructions show that the obstruction is attributable neither specifically to input constraints nor to state or output constraints. Every constructed MPC problem is feasible for every state, so recursive feasibility is automatic in these reductions and is not the mechanism encoding undecidability. In every construction the optimizer at the origin is zero, so the origin is a closed-loop equilibrium.

\begin{theorem}[Undecidability of the MPC stability decision problem]
\label{thm:mpc-undecidability}
The global asymptotic-stability decision problem for rational constrained finite-horizon linear MPC is undecidable.  Equivalently, no algorithm can terminate on every instance and correctly decide whether the resulting closed loop is globally asymptotically stable.  This remains true even when the horizon is fixed to one and the state, terminal, and input weights are identity matrices of the appropriate dimensions, already for each of the following subclasses: a one-step predicted-state/terminal box with no input constraints; an input box with no state or terminal constraints; and a quadratically softened input box with no hard bounds on the physical state or input and with a fixed rational slack penalty.  In every case the quadratic program is strictly convex, has a unique optimizer, and is feasible for every state.
\end{theorem}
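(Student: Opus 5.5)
We will reduce from \Cref{thm:saturated-source}. Given a rational $A\in\Q^{n\times n}$, we build three rational horizon-one MPC instances with $Q=R=P=I$, one per subclass, whose closed loop is exactly \eqref{eq:saturated-system}, or contains it as an autonomous component. Global asymptotic stability of the MPC loop will then hold if and only if it holds for \eqref{eq:saturated-system}, and an algorithm for the MPC problem would decide \Cref{thm:saturated-source}. The basic fact we exploit is that a Euclidean projection onto a box is componentwise saturation. The design goal is therefore to make the horizon-one QP equal to a projection of $Ax_k$ onto the unit box, followed by a plant map that delivers the projected point as the next state.

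\emph{Predicted-state box.} Take $n_x=n_u=n$, $F=0$, $G=I$, and impose $x_{1|k}\in[-1,1]^n$, with no input constraints. The cost is $\|x_k\|^2+\|u\|^2+\|u\|^2$, since $x_{1|k}=u$, so the optimizer is $u=0$ and we get no saturation of $Ax$. To force the QP to track $Ax_k$, we enlarge the state: let the state be $z=(x,w)$, where $w$ is an auxiliary block. We choose $F,G$ so that $x_{1|k}=u_1-$ (something depending on $Ax$). Concretely, set $x_{1|k}=Ax+u_1$ and constrain $x_{1|k}$ to the box. Minimizing $\|u_1\|^2$ then gives $x_{1|k}=\Pi_{[-1,1]^n}(Ax)=\sat(Ax)$, because with identity weights the $x_{1|k}$ term also enters the cost. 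This is the delicate point: the cost is $\|x_{1|k}\|^2+\|u\|^2$, which is minimized by shrinking toward $0$ rather than toward $Ax$. We plan to fix this by splitting the input, $u=(u_1,u_2)$, with $x_{1|k}=Ax+u_1-u_2$ (suitably scaled), or by adding an auxiliary state component that records $u$ so that the identity weights produce the distance $\|x_{1|k}-Ax\|^2$ up to a positive constant. Rational scalings $\alpha,\beta$ of $F$ and $G$ give the freedom to tune the balance, and a change of state coordinates by a rational diagonal matrix keeps $Q=P=I$ in appearance. Auxiliary components that are reset to deterministic functions of $x$ (for instance $w_{k+1}=0$ or a stable copy) form a cascade. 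We must check that the cascade is globally asymptotically stable if and only if the $x$-block is, which follows because the auxiliary dynamics are driven by a converging signal through a nilpotent or contracting map.

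\emph{Input box and soft input box.} With input constraints $u\in[-1,1]^n$ and no state constraints, we aim for a separable objective whose unconstrained minimizer is $u=Ax$ (after scaling), so that the box-constrained minimizer is $\sat(Ax)$ and $G$ routes $u$ into $x_{k+1}$. The identity-weight cost $\|Fx+Gu\|^2+\|u\|^2$ has unconstrained minimizer $-(I+G^\top G)^{-1}G^\top Fx$. Choosing $G=I$ and $F=-2A$, on an augmented state that carries the correct closed loop, yields minimizer $Ax$. The Hessian $2I$ is diagonal, so the box-constrained solution is the componentwise clip, i.e.\ $\sat(Ax)$. We then use an auxiliary coordinate so that the implemented next state on the main block is $u$; for example, the state is $(x,y)$ and the dynamics let the cost see $Fx+u$ while the main block updates to $u$. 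For the soft version, we replace hard bounds by $u=v+s^+-s^-$ with $|v|\le1$ expressed through slacks penalized quadratically. A fixed rational penalty yields a shrunken Huber-type saturation rather than the exact one. We would therefore instead encode the box on an auxiliary predicted variable whose slack penalty, combined with the identity weights, reproduces $\sat$ exactly after a rational rescaling of $A$ that absorbs the shrink factor.

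\emph{Remaining checks and main obstacle.} In each case we verify strict convexity, since $R=I$ and the slack penalty is positive definite, and feasibility for every state, since the boxes sit on free decision variables. The rational data are computable from $A$, and $\kappa(0)=0$. The main obstacle is the identity-weight requirement: the state cost penalizes shrinking toward zero, so exact saturation must come from a block structure in $F,G$ that converts $\|x_{1|k}\|^2+\|u\|^2$ into a multiple of $\|x_{1|k}-Ax\|^2$ plus terms independent of the decision. Beyond that, we must ensure the auxiliary dynamics preserve the stability equivalence, including Lyapunov stability, via the cascade argument.
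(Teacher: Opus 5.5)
Your overall strategy is the paper's: reduce from \cref{thm:saturated-source}, make the horizon-one QP a Euclidean projection onto the unit box, and attach memoryless auxiliary states. However, the quadratically softened input case has a genuine gap. A quadratic slack penalty produces a \emph{leaky} saturation, not a scaled one. Minimizing $c\|u-v\|^2+\rho\|s\|^2$ subject to $-\mathbf 1-s\le u\le\mathbf 1+s$, $s\ge0$ gives $u_i^\star=v_i$ for $|v_i|\le1$ and $u_i^\star=\pm1+\tfrac{c}{c+\rho}(v_i\mp1)$ for $\pm v_i>1$. The slope outside the box is therefore $c/(c+\rho)>0$ for every finite penalty. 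Replacing $A$ by $\lambda A$ multiplies the inner and outer slopes by the same factor, so no rational rescaling ``absorbs the shrink factor''. Without a linear correction, feeding $u_k^\star$ into the next state never yields exactly $\sat(Az_k)$: matching inside the box forces the input map to be $I$, and then the outer slope is wrong. The missing idea is to cancel the leakage linearly through the plant drift. With $v=Az_k$, take $z_{k+1}=-Az_k+2u_k$ and add a memoryless component $y_{k+1}=4Az_k-u_k$. The identity-weight cost then equals $6\|u_k-v\|^2+11\|v\|^2+6\|s_k\|^2$, whose optimizer is $u_k^\star=\tfrac12(v+\sat(v))$, so $z_{k+1}=-v+2u_k^\star=\sat(v)$ exactly. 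Your alternatives, a hard box on an auxiliary variable or a soft box on a predicted variable, would also leave the stated subclass of a softened input box with no hard bounds on the physical state or input.

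The other two constructions are left as lists of options and contain slips, though the rescaling you mention repairs them. For the predicted-state box no augmentation is needed. Take $F=2A$, $G=I$; with $z=x_{1|k}$ and $u=z-2Ax_k$, the decision-dependent cost is $\|z-2Ax_k\|^2+\|z\|^2=2\|z-Ax_k\|^2+2\|Ax_k\|^2$, so the closed loop is exactly $\sat(Ax_k)$. A diagonal change of coordinates $x=D\tilde x$ would not keep the weights at identity, since $\|x\|^2=\tilde x^{\mathsf T}D^2\tilde x$. For the input box, you tuned $F=-2A$ on $\|Fx+u\|^2+\|u\|^2$. Once the main block is routed as $z_{k+1}=u$, the term $\|z_{k+1}\|^2=\|u\|^2$ also enters the cost, giving $3\|u-\tfrac23Az_k\|^2+\text{const}$ and the closed loop $\sat(\tfrac23Az_k)$. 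The paper uses $y_{k+1}=3Az_k-u_k$ to obtain $3\|u_k-Az_k\|^2+6\|Az_k\|^2$. Finally, only the memoryless case $y_{k+1}=h(z_k)$ of your cascade argument is needed (\cref{lem:static-augmentation}); there Lyapunov stability follows from continuity of $h$ at the origin, not from any contraction property.
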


\begin{proof}
The propositions in the following subsections give, for every rational matrix $A$, a rational MPC instance $\mathcal I(A)$ whose relevant closed-loop component is exactly $x_{k+1}=\sat(Ax_k)$. They establish that $\sat(A\,\cdot)$ is globally asymptotically stable if and only if $\mathcal I(A)$ is globally asymptotically stable.
Thus $A\mapsto\mathcal I(A)$ is a computable many-one reduction from saturated-system stability to MPC stability.  An algorithm deciding global asymptotic stability for general linear MPC would therefore decide the same property for rational saturated linear systems, contradicting \cref{thm:saturated-source}.
\end{proof}

The remainder of this section establishes the three exact realizations used in the theorem.

\subsection{Predicted-state constraints only}
\label{sec:hard-state}

This is the most direct realization.  Given $A\in\Q^{n\times n}$, define the linear system
\begin{equation}
  x_{k+1}=2Ax_k+u_k,
  \label{eq:state-only-plant}
\end{equation}
with $u_k\in\R^n$.  Denote the unit box by $\B_n\coloneqq[-1,1]^n$.  At every state $x_k\in\R^n$, solve
\begin{subequations}
\label{eq:state-only-mpc}
\begin{align}
  \min_{u_k,x_{k+1}}\quad & \|x_k\|^2+\|u_k\|^2+\|x_{k+1}\|^2,\\
  \text{subject to}\quad & x_{k+1}=2Ax_k+u_k,\\
  & x_{k+1}\in\B_n.
\end{align}
\end{subequations}
There are no input constraints and the only
inequalities are a hard box constraint on the one-step predicted state, which can
equivalently be viewed as a terminal or predicted-output constraint.

\begin{proposition}[Exact projection realization]
\label{prop:state-only}
Problem \eqref{eq:state-only-mpc} is feasible for every $x_k\in\R^n$, has a unique
optimizer, and generates the closed-loop map
\[
  x_{k+1}=\sat(Ax_k).
\]
\end{proposition}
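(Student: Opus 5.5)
Eliminate $u_k$ using the equality constraint $u_k = y - 2Ax_k$, where $y \coloneqq x_{k+1}$ denotes the predicted successor state. For fixed $x_k$, the term $\|x_k\|^2$ is a constant. Problem \eqref{eq:state-only-mpc} therefore reduces to
\[
  \min_{y\in\B_n}\ \|y-2Ax_k\|^2+\|y\|^2 .
\]
Feasibility for every $x_k$ is immediate: $y=0$ together with $u_k=-2Ax_k$ satisfies all constraints, because $0\in\B_n$. The reduced objective is a strictly convex quadratic in $y$, with Hessian $4I$, minimized over a nonempty compact convex set. Hence a minimizer $y^\star$ exists and is unique. The full optimizer $(u_k^\star,x_{k+1}^\star)$ is unique as well, since $u_k^\star=y^\star-2Ax_k$ is determined affinely by $y^\star$.

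The key step is to complete the square:
\[
  \|y-2Ax_k\|^2+\|y\|^2 \;=\; 2\|y-Ax_k\|^2 + 2\|Ax_k\|^2 .
\]
This identity is the sum $\|a-b\|^2+\|a\|^2=2\|a-b/2\|^2+\|b\|^2/2$ applied with $a=y$ and $b=2Ax_k$. Consequently $y^\star$ is the Euclidean projection of $Ax_k$ onto $\B_n$. Because the box is a product of intervals and the objective is separable across coordinates, the projection acts componentwise. Each coordinate clips $(Ax_k)_i$ to $[-1,1]$, which is exactly $\sat(Ax_k)_i$. This is precisely why the plant uses the gain $2A$ rather than $A$: the identity state and input weights place the unconstrained minimizer at the midpoint $Ax_k$.

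The implemented closed loop follows from the plant equation. The applied input is $\kappa_1(x_k)=u_k^\star=\sat(Ax_k)-2Ax_k$. The plant \eqref{eq:state-only-plant} then gives $x_{k+1}=2Ax_k+\kappa_1(x_k)=\sat(Ax_k)$. This coincides with the predicted state, because the prediction model equals the plant. At the origin the optimizer is zero, consistent with the remarks above.

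The only step requiring real care is the completing-the-square identity together with the separability argument. The rest is routine.
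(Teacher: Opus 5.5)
Your proof is correct and takes the same route as the paper. You eliminate $u_k$, complete the square to $2\|x_{k+1}-Ax_k\|^2+2\|Ax_k\|^2$, and identify the optimizer as the box projection $\sat(Ax_k)$, with feasibility from $x_{k+1}=0$ and uniqueness from strict convexity.
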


\begin{proof}
Put $z=x_{k+1}$ and eliminate $u_k=z-2Ax_k$.  The decision-dependent objective is
\begin{align*}
  \|z-2Ax_k\|^2+\|z\|^2
  &=2\|z-Ax_k\|^2+2\|Ax_k\|^2.
\end{align*}
Thus $z$ is the Euclidean projection of $Ax_k$ onto $\B_n$, so
$z^\star=\sat(Ax_k)$.  Feasibility is global because, for example, $z=0$ and
$u_k=-2Ax_k$ are feasible for every $x_k$.  Strict convexity gives uniqueness.
\end{proof}

Together with \cref{thm:saturated-source}, \cref{prop:state-only} already proves that
the stability decision problem is undecidable for horizon-one MPC with only
predicted-state constraints; the feasible set in this construction is all of
$\R^n$. The following corollary strengthens this conclusion by showing that
undecidability persists when the feasible set is compact and invariant.

\begin{corollary}[Compact invariant feasible set]
\label{cor:compact-box}
Add $x_{0|k}=x_k\in\B_n$ to \eqref{eq:state-only-mpc}, thereby restricting
the current-state parameter.  The resulting MPC feasible set is exactly the
compact invariant polytope $\B_n$, and deciding asymptotic stability relative
to this feasible set is undecidable.
\end{corollary}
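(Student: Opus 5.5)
The plan has three parts: compute the feasible set, check that the closed loop is unchanged there, and reduce stability relative to $\B_n$ to global asymptotic stability of $\sat(A\,\cdot)$.

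\emph{Feasible set.} Adding $x_{0|k}=x_k\in\B_n$ constrains only the parameter. For $x_k\in\B_n$ the point $z=0$, $u_k=-2Ax_k$ is still feasible, and for $x_k\notin\B_n$ nothing is feasible. Hence $\mathcal X_1=\B_n$ exactly, which is a compact polytope. On $\B_n$ the optimization over $(u_k,x_{k+1})$ is the same as in \eqref{eq:state-only-mpc}, so \cref{prop:state-only} still applies: the optimizer is unique and the closed loop is $x_{k+1}=\sat(Ax_k)$. Since $\sat$ maps into $\B_n$, every trajectory starting in $\mathcal X_1$ stays in $\mathcal X_1$, so $\B_n$ is invariant.

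\emph{Reduction.} I will show that the origin of $\sat(A\,\cdot)$ is globally asymptotically stable on $\R^n$ if and only if it is asymptotically stable relative to $\B_n$. The forward direction is immediate, because restricting a GAS system to an invariant set preserves both attractivity and Lyapunov stability.

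For the converse, take any $x_0\in\R^n$. Then $x_1=\sat(Ax_0)\in\B_n$, so from time $1$ on the trajectory lies in $\B_n$ and converges to the origin. For Lyapunov stability, note that $\sat(A\,\cdot)$ is globally Lipschitz with constant $L=\|A\|$, because $\sat$ is 1-Lipschitz componentwise. Given $\varepsilon>0$, stability relative to $\B_n$ gives some $\delta'\le\varepsilon$ for trajectories starting in $\B_n$. Now choose
\[
\delta=\min\{\delta',\,\delta'/L,\,1\}.
\]
Then $\|x_0\|<\delta$ gives $\|x_0\|<\varepsilon$ and $\|x_1\|\le L\|x_0\|<\delta'$ with $x_1\in\B_n$, so every later state also stays within $\varepsilon$. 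If $A=0$ the claim is trivial.

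\emph{Conclusion.} The map $A\mapsto$ (boxed instance) is computable, so by \cref{thm:saturated-source} deciding asymptotic stability relative to the feasible set is undecidable.

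The step that needs the most care is the converse's Lyapunov stability, which requires the one-step Lipschitz argument above. Everything else is bookkeeping on the feasible set.
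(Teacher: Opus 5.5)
Your proof is correct and follows the paper's argument. The paper also uses that the optimizer is still $\sat(Ax_k)$ on $\B_n$, so the box is invariant, and that every source trajectory enters $\B_n$ after one step. Your one-step Lipschitz bound is unnecessary, though: your choice $\delta\le 1$ already puts $x_0$ itself in $\B_n$, because the Euclidean unit ball lies inside the box. Stability relative to $\B_n$ then applies directly, which is exactly the paper's remark that $\B_n$ contains a neighborhood of the origin.
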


\begin{proof}
The optimizer remains $x_{k+1}=\sat(Ax_k)$ for every $x_k\in\B_n$, and hence $x_{k+1}\in\B_n$.  Moreover, every trajectory of the source system \eqref{eq:saturated-system} enters $\B_n$ after one step.  Consequently, the source system is globally asymptotically stable on $\R^n$ if and only if it is asymptotically stable relative to $\B_n$: global convergence follows after the first step, while Lyapunov stability is a local property and $\B_n$ contains a neighborhood of the origin.  The claim now follows from \cref{thm:saturated-source}.
\end{proof}

\subsection{Input constraints only}
\label{sec:input-only}

The next two constructions augment the source state with a component that has
no memory of its own.  We first introduce a minor stability equivalence used for both.

\begin{lemma}[Stability under memoryless augmentation]
\label{lem:static-augmentation}
Let $f:\R^n\to\R^n$ and $h:\R^n\to\R^m$ be continuous, with $f(0)=0$ and $h(0)=0$.  The origin of
\[
  (z_{k+1},y_{k+1})=(f(z_k),h(z_k))
\]
is globally asymptotically stable if and only if the origin of $z_{k+1}=f(z_k)$ is globally asymptotically stable.
\end{lemma}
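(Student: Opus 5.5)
We prove the two implications separately, after recording the trajectory structure of the augmented system. For any initial condition $(z_0,y_0)$, the $z$-component evolves autonomously as $z_k=f^k(z_0)$, while $y_k=h(z_{k-1})$ for every $k\ge1$. Thus $y_0$ influences nothing after time zero, and from time one on the $y$-component is a continuous function of the previous $z$-state. Asymptotic stability is understood as Lyapunov stability plus global attractivity.

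\emph{Augmented stable $\Rightarrow$ source stable.}
\begin{itemize}
\item Attractivity: given $z_0$, take $y_0=0$. Convergence of $(z_k,y_k)$ to the origin forces $z_k\to0$.
\item Lyapunov stability: for $\varepsilon>0$, pick $\delta$ from the stability of the augmented system and restrict to initial conditions $(z_0,0)$ with $\|z_0\|<\delta$. Then $\|z_k\|\le\|(z_k,y_k)\|<\varepsilon$ for all $k$.
\end{itemize}

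\emph{Source stable $\Rightarrow$ augmented stable.}
\begin{itemize}
\item Attractivity: $z_k\to0$ by hypothesis. Since $h$ is continuous with $h(0)=0$, we get $y_k=h(z_{k-1})\to0$.
\item Lyapunov stability: this is the only step needing care, because $h$ must be controlled uniformly near the origin. Fix $\varepsilon>0$. By continuity of $h$ at $0$, choose $\eta\in(0,\varepsilon/2]$ such that $\|z\|<\eta$ implies $\|h(z)\|<\varepsilon/2$. By Lyapunov stability of $f$, choose $\delta\in(0,\eta]$ such that $\|z_0\|<\delta$ implies $\|z_k\|<\eta$ for all $k$. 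Now let $\|(z_0,y_0)\|<\delta$.
\begin{itemize}
\item At $k=0$ the bound holds trivially.
\item For $k\ge1$, we have $\|z_k\|<\eta\le\varepsilon/2$ and $\|y_k\|=\|h(z_{k-1})\|<\varepsilon/2$. Hence $\|(z_k,y_k)\|\le\|z_k\|+\|y_k\|<\varepsilon$.
\end{itemize}
\end{itemize}

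Any norm on the product space works here, since all such norms are equivalent; it is convenient to use $\|(z,y)\|\le\|z\|+\|y\|$ together with $\|z\|,\|y\|\le\|(z,y)\|$.

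The main, though modest, obstacle is this $\varepsilon$–$\delta$ bookkeeping for Lyapunov stability, since it combines continuity of $h$ at $0$ with stability of $f$. Continuity of $f$ is not actually needed beyond $f(0)=0$. Everything else follows directly from the trajectory formula above.
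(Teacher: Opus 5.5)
Your proof is correct and follows essentially the same route as the paper. The forward direction restricts to initial conditions $(z_0,0)$ and uses $\|z_k\|\le\|(z_k,y_k)\|$, and the converse uses $y_k=h(z_{k-1})$ together with continuity of $h$ at the origin and Lyapunov stability of $f$; your $\varepsilon/2$ triangle-inequality bookkeeping simply replaces the paper's abstract product-norm radius $\rho$.
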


\begin{proof}
Suppose the augmented system is globally asymptotically stable.  For any
$z_0$, initialize it at $(z_0,0)$.  Its first component then satisfies
$z_k=f^k(z_0)$.  Convergence of the augmented trajectory therefore implies
$f^k(z_0)\to0$.  Moreover, since $\|(z_0,0)\|=\|z_0\|$ and
$\|z_k\|\le\|(z_k,y_k)\|$, Lyapunov stability of the augmented system implies
Lyapunov stability of $z_{k+1}=f(z_k)$.

Conversely, suppose that $z_{k+1}=f(z_k)$ is globally asymptotically stable.  For every $k\ge1$,
\[
  (z_k,y_k)=\bigl(f^k(z_0),h(f^{k-1}(z_0))\bigr).
\]
Global convergence follows from continuity of $h$ at the origin.  For Lyapunov stability, fix $\varepsilon>0$ and choose $\rho>0$ such that $\|\xi\|<\rho$ and $\|\zeta\|<\rho$ imply $\|(\xi,\zeta)\|<\varepsilon$.  By continuity of $h$, choose $\eta>0$ such that $\|z\|<\eta$ implies $\|h(z)\|<\rho$.  Lyapunov stability of $f$ then gives a $\delta_z>0$ such that $\|z_0\|<\delta_z$ implies $\|f^k(z_0)\|<\min\{\eta,\rho\}$ for every $k\ge0$.  Choose $\delta\le\min\{\delta_z,\varepsilon\}$.  If $\|(z_0,y_0)\|<\delta$, then the initial augmented state is within $\varepsilon$, while the displayed iterate formula and the preceding bounds control every $k\ge1$.
\end{proof}

We now remove all state and output constraints.  Introduce the augmented state
\[
  x_k=\begin{bmatrix}z_k\\y_k\end{bmatrix}\in\R^{2n}
\]
and the plant
\begin{subequations}
\label{eq:input-only-plant}
\begin{align}
  z_{k+1}&=u_k,\\
  y_{k+1}&=3Az_k-u_k.
\end{align}
\end{subequations}
Equivalently,
\[
  x_{k+1}
  =
  \begin{bmatrix}0&0\\3A&0\end{bmatrix}x_k
  +
  \begin{bmatrix}I\\-I\end{bmatrix}u_k.
\]
The horizon-one controller is
\begin{subequations}
\label{eq:input-only-mpc}
\begin{align}
  \min_{u_k,x_{k+1}}\quad &\|x_k\|^2+\|u_k\|^2+\|x_{k+1}\|^2,\\
  \text{subject to}\quad &\eqref{eq:input-only-plant},\\
  &u_k\in\B_n.
\end{align}
\end{subequations}
The box on $u_k$ is the only inequality constraint.

\begin{proposition}[Exact realization using input saturation]
\label{prop:input-only}
The optimizer of \eqref{eq:input-only-mpc} is
\[
  u_k^\star=\sat(Az_k),
\]
and the MPC closed loop is
\[
  x_{k+1}=\begin{bmatrix}z_{k+1}\\y_{k+1}\end{bmatrix}
  =
  \begin{bmatrix}
    \sat(Az_k)\\
    3Az_k-\sat(Az_k)
  \end{bmatrix}.
\]
It is globally asymptotically stable if and only if
$z_{k+1}=\sat(Az_k)$ is globally asymptotically stable.
\end{proposition}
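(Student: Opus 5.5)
Eliminate the predicted state and reduce the horizon-one problem to a separable, strictly convex quadratic in $u_k$ over the box $\B_n$. Substituting the plant \eqref{eq:input-only-plant}, the decision-dependent part of the objective is
\[
  \|u_k\|^2+\|u_k\|^2+\|3Az_k-u_k\|^2
  =3\|u_k\|^2-6(Az_k)^{\mathsf T}u_k+\|3Az_k\|^2 .
\]
Completing the square gives
\[
  3\|u_k\|^2-6(Az_k)^{\mathsf T}u_k+9\|Az_k\|^2
  =3\|u_k-Az_k\|^2+6\|Az_k\|^2 .
\]
The constant $\|x_k\|^2$ and the term $6\|Az_k\|^2$ do not depend on $u_k$. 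Minimizing over $u_k\in\B_n$ is therefore the Euclidean projection of $Az_k$ onto the unit box. That projection is computed componentwise and equals $\sat(Az_k)$. Strict convexity, from the Hessian $6I$, gives uniqueness. Feasibility at every state is immediate, since $u_k=0$ is always admissible and there are no state or terminal constraints. Substituting $u_k^\star$ into the plant yields the displayed closed loop: $z_{k+1}=\sat(Az_k)$ and $y_{k+1}=3Az_k-\sat(Az_k)$.

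For the stability equivalence, apply \cref{lem:static-augmentation} with $f(z)=\sat(Az)$ and $h(z)=3Az-\sat(Az)$. Both maps are continuous, because saturation is continuous and composed with linear maps, and both vanish at the origin. The key structural fact to check is that the closed loop is memoryless in $y$: neither $z_{k+1}$ nor $y_{k+1}$ depends on $y_k$. This holds because the plant's $y$-column is zero and the optimizer depends on $x_k$ only through $z_k$. The closed loop therefore has exactly the form $(z_{k+1},y_{k+1})=(f(z_k),h(z_k))$ required by the lemma, which then gives the claimed equivalence directly.

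The only point needing care is the completing-the-square step. The weight $3$ on $A$ in the plant is chosen so that the unconstrained minimizer is exactly $Az_k$ rather than a scaled version; with $u$ appearing twice with unit weight and once through $y_{k+1}$, the linear coefficient $6Az_k$ divided by $2\cdot 3$ gives $Az_k$. I would verify this coefficient explicitly. The rest is routine.
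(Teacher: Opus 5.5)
Your proposal is correct and follows the paper's proof essentially step for step. You eliminate the successor state, complete the square to get $3\|u_k-Az_k\|^2+6\|Az_k\|^2$, identify the minimizer as the box projection $\sat(Az_k)$, and invoke \cref{lem:static-augmentation} with $f(z)=\sat(Az)$ and $h(z)=3Az-\sat(Az)$; your explicit checks of uniqueness (Hessian $6I$) and global feasibility ($u_k=0$) are details the paper leaves implicit in this proof but that are consistent with it.
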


\begin{proof}
Let $v=Az_k$.  Omitting the current-state term, the objective is
\begin{align*}
  &\|u_k\|^2+\|z_{k+1}\|^2+\|y_{k+1}\|^2\\
  &\quad=\|u_k\|^2+\|u_k\|^2+\|3v-u_k\|^2\\
  &=3\|u_k-v\|^2+6\|v\|^2.
\end{align*}
Projection onto $\B_n$ gives $u_k^\star=\sat(v)$.  In particular, the
optimizer depends only on $z_k$, not on the memoryless component $y_k$.
The displayed closed-loop map follows from the plant, and the stability
equivalence follows from \cref{lem:static-augmentation} with
$f(z)=\sat(Az)$ and $h(z)=3Az-\sat(Az)$.
\end{proof}

\subsection{Quadratically softened input constraints}
\label{sec:soft-input}

The final construction removes all hard bounds on both states and inputs.  Instead, the input box is softened by an unbounded nonnegative slack.  Introduce the augmented state
\[
  x_k=\begin{bmatrix}z_k\\y_k\end{bmatrix}\in\R^{2n},
  \qquad u_k\in\R^n,
\]
and the plant
\begin{subequations}
\label{eq:soft-plant}
\begin{align}
  z_{k+1}&=-Az_k+2u_k,\\
  y_{k+1}&=4Az_k-u_k.
\end{align}
\end{subequations}
At each state $x_k$ solve
\begin{subequations}
\label{eq:soft-mpc}
\begin{align}
  \min_{u_k,s_k,x_{k+1}}\quad
  &\|x_k\|^2+\|u_k\|^2+\|x_{k+1}\|^2+6\|s_k\|^2,\\
  \text{subject to}\quad
  &\eqref{eq:soft-plant},\\
  &-\mathbf 1-s_k\le u_k\le \mathbf 1+s_k,\\
  &s_k\ge0.
\end{align}
\end{subequations}
Since the slack is unbounded above, the problem is feasible for every current state.  The displayed inequalities remain hard constraints of the quadratic program, but they impose no hard bound on the physical state or input.

\begin{lemma}[Leaky projection induced by a quadratic slack]
\label{lem:leaky}
For $v\in\R^n$, the unique optimizer of
\begin{equation}
\label{eq:leaky-problem}
\begin{aligned}
  \min_{u,s}\quad &\|u-v\|^2+\|s\|^2\\
  \text{subject to}\quad
  &-\mathbf 1-s\le u\le\mathbf 1+s,\quad s\ge0
\end{aligned}
\end{equation}
has components
\[
  u^\star=\frac12\bigl(v+\sat(v)\bigr),
  \qquad
  s_i^\star=\frac12\bigl(|v_i|-1\bigr)_+.
\]
\end{lemma}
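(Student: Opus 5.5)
The problem \eqref{eq:leaky-problem} separates across coordinates: both the objective and the constraints decouple into $n$ independent problems in the pairs $(u_i,s_i)$. So it suffices to solve, for a scalar $v$,
\[
  \min_{u,s}\;(u-v)^2+s^2 \quad\text{subject to}\quad |u|\le 1+s,\ s\ge0 .
\]
The scalar problem is strictly convex in $(u,s)$, since its Hessian is $2I$. The feasible set is a nonempty closed convex set (it contains $(0,0)$), so a unique minimizer exists. Uniqueness of the vector optimizer then follows componentwise.

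Next, fix $s\ge0$ and minimize over $u$ alone. The constraint $|u|\le 1+s$ makes this a projection of $v$ onto $[-(1+s),1+s]$. This leaves a one-dimensional problem in $s$:
\[
  g(s)=\bigl(|v|-1-s\bigr)_+^2+s^2,\qquad s\ge0 .
\]
Now split into cases on $|v|$.

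If $|v|\le1$, then $g(s)=s^2$, so $s^\star=0$ and $u^\star=v$. This agrees with $\tfrac12(v+\sat(v))=v$ and with $s^\star=\tfrac12(|v|-1)_+=0$.

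If $|v|>1$, put $a=|v|-1>0$.
\begin{itemize}
  \item On $[0,a]$ we have $g(s)=(a-s)^2+s^2$, which is minimized at $s=a/2$.
  \item For $s\ge a$ we have $g(s)=s^2$, which is increasing, so no minimizer lies there.
\end{itemize}
Hence $s^\star=a/2=\tfrac12(|v|-1)$. Then $u^\star=\operatorname{sign}(v)(1+a/2)=\operatorname{sign}(v)\tfrac{1+|v|}{2}$, which equals $\tfrac12(v+\sat(v))$ because $\sat(v)=\operatorname{sign}(v)$ in this case.

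As an alternative check, the KKT conditions could be verified directly at the claimed point. Take $v>1$, with only the upper constraint $u\le 1+s$ active and multiplier $\lambda$. Stationarity gives $2(u-v)+\lambda=0$ and $2s-\lambda=0$, so $\lambda=2s=v-1>0$. Because the problem is convex, these conditions are sufficient.

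No step is a real obstacle. The only point needing care is the reduction to $g(s)$: the inner minimization over $u$ must be done correctly, and the case $s\ge a$ must be excluded.
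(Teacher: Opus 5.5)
Your proof is correct. It follows the same overall strategy as the paper: coordinate separation, partial minimization down to a scalar problem, then casework on $|v|$. The difference is that you eliminate the variables in the opposite order.

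The paper fixes $u_i$ and takes the optimal slack $s_i=(|u_i|-1)_+$. It then minimizes the reduced function $(u_i-v_i)^2+(|u_i|-1)_+^2$, which is $C^1$ and strictly convex. A single stationarity equation in each regime gives $u_i^\star$, and $s_i^\star$ follows by substitution. This route exposes the ``exterior quadratic penalty'' structure behind the leaky projection.

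You instead fix $s$ and use projection onto $[-(1+s),1+s]$ to obtain the piecewise quadratic $g(s)=(|v|-1-s)_+^2+s^2$ on $s\ge0$. This avoids any differentiability argument, at the cost of comparing the two pieces of $g$. You obtain $s^\star$ first and recover $u^\star$ by projection. You also establish existence and uniqueness explicitly through joint strict convexity, whereas the paper leaves this implicit in the reduced problem.

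One small tightening: you say no minimizer lies in $[a,\infty)$ because $g$ is increasing there, but the endpoint $s=a$ belongs to both pieces. It is cleaner to note directly that $g(s)\ge a^2>a^2/2=g(a/2)$ for all $s\ge a$.
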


\begin{proof}
The problem separates by coordinate.  For fixed $u_i$, the optimal slack is
$s_i=(|u_i|-1)_+$.  Eliminating it gives the continuously differentiable,
strictly convex scalar problem
\[
  \min_{u_i\in\R}
  (u_i-v_i)^2+\bigl(|u_i|-1\bigr)_+^2.
\]
If $|v_i|\le1$, the unique minimizer is $u_i=v_i$.  If $v_i>1$, stationarity above one gives $u_i=(v_i+1)/2$.  The case $v_i<-1$ is symmetric and gives $u_i=(v_i-1)/2$.  These cases are exactly $u_i^\star=(v_i+\sat(v_i))/2$.  Substitution into $s_i=(|u_i|-1)_+$ gives the stated $s_i^\star$.
\end{proof}

\begin{proposition}[Soft input constraints recover exact saturation]
\label{prop:soft}
The optimizer of \eqref{eq:soft-mpc} satisfies
\[
  u_k^\star=\frac12\bigl(Az_k+\sat(Az_k)\bigr),
\]
and the first closed-loop component obeys
\[
  z_{k+1}=\sat(Az_k).
\]
The complete augmented closed loop is globally asymptotically stable if and only if the source saturated system is globally asymptotically stable.
\end{proposition}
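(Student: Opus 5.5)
Let $v=Az_k$ and eliminate the state variables through the plant \eqref{eq:soft-plant}: $z_{k+1}=-v+2u_k$ and $y_{k+1}=4v-u_k$. Dropping the constant $\|x_k\|^2$, the decision-dependent objective of \eqref{eq:soft-mpc} is
\[
  \|u_k\|^2+\|2u_k-v\|^2+\|4v-u_k\|^2+6\|s_k\|^2 .
\]
Expanding, the $\|u_k\|^2$ coefficient is $1+4+1=6$. The cross terms are $-4u_k^{\mathsf T}v-8u_k^{\mathsf T}v=-12u_k^{\mathsf T}v$, and the constant part is $\|v\|^2+16\|v\|^2=17\|v\|^2$. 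Hence the objective equals
\[
  6\bigl(\|u_k-v\|^2+\|s_k\|^2\bigr)+11\|v\|^2 .
\]
The constraints on $(u_k,s_k)$ are exactly those of \eqref{eq:leaky-problem}, and the objective is a positive multiple of that problem's objective plus a term independent of the decisions. Therefore \cref{lem:leaky} applies and yields the unique optimizer $u_k^\star=\tfrac12(v+\sat(v))$ together with the stated slack. Substituting into the plant gives
\[
  z_{k+1}=-v+v+\sat(v)=\sat(Az_k),\qquad
  y_{k+1}=4Az_k-\tfrac12\bigl(Az_k+\sat(Az_k)\bigr).
\]

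Next I check the closed-loop structure needed for \cref{lem:static-augmentation}. Neither component of the closed loop depends on $y_k$. The $z$-update is $f(z)=\sat(Az)$, and the $y$-update is $h(z)=\tfrac72Az-\tfrac12\sat(Az)$. Both maps are continuous and vanish at $z=0$. \Cref{lem:static-augmentation} then gives that the augmented system is globally asymptotically stable if and only if $z_{k+1}=\sat(Az_k)$ is.

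For completeness I also record global feasibility and uniqueness. Feasibility is already noted in the text: take $u_k=0$ and $s_k$ large. Uniqueness follows because the reduced objective is strictly convex in $(u_k,s_k)$, and the states are affine functions of $u_k$.

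The only real obstacle is the completing-the-square computation. The coefficients $2,-1,4,6$ must make the $u$-quadratic coefficient equal to the slack weight, so that the slack and $u$ terms appear in the exact $1{:}1$ ratio of \cref{lem:leaky}. They must also make the linear term center the quadratic at exactly $v$. I would verify both coefficient matches explicitly. After that, the result follows directly from the two earlier lemmas.
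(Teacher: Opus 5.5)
Your proposal is correct and takes essentially the same route as the paper. You complete the square to obtain $6(\|u_k-v\|^2+\|s_k\|^2)+11\|v\|^2$, invoke \cref{lem:leaky}, substitute into the plant to get $z_{k+1}=\sat(Az_k)$ and $y_{k+1}=\tfrac72 Az_k-\tfrac12\sat(Az_k)$, and conclude with \cref{lem:static-augmentation}, with coefficients that match the paper's computation.
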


\begin{proof}
Set $v=Az_k$.  The part of the objective depending on $u_k$ and $s_k$ is
\begin{align*}
 &\|u_k\|^2+\|-v+2u_k\|^2+\|4v-u_k\|^2+6\|s_k\|^2\\
 &\qquad=6\|u_k-v\|^2+11\|v\|^2+6\|s_k\|^2.
\end{align*}
After division by six and removal of the constant term, the optimization over $u_k$ and $s_k$ is exactly \eqref{eq:leaky-problem}.  Hence, by \cref{lem:leaky},
\[
  u_k^\star=\tfrac12\bigl(v+\sat(v)\bigr).
\]
The first plant equation cancels the linear leakage:
\[
  z_{k+1}=-v+2u_k^\star=\sat(v).
\]
The remaining successor component is
\[
  y_{k+1}=\frac72v-\frac12\sat(v),
\]
a continuous function of $z_k$ that vanishes at the origin.  The stability equivalence follows from \cref{lem:static-augmentation} with $f(z)=\sat(Az)$ and $h(z)=\tfrac72Az-\tfrac12\sat(Az)$.
\end{proof}

\section{Undecidability with fixed state and input dimensions}
\label{sec:fixed-dimension}

The saturated-system reductions above use a state dimension inherited from the
source matrix.  We now replace that source problem by
\cref{thm:fixed-pwa-source} and use an epigraph lifting of continuous PWA maps.
The lifting is based on the constructive difference-of-convex decomposition of
continuous PWA functions \cite{KripfganzSchulze1987,HempelEtAl2013}.

Inverse parametric optimization already represents continuous PWA maps through
suitably constructed LPs or QPs \cite{HempelEtAl2013,HempelEtAl2015}.  The
contribution here is not the DC lifting itself, but showing that inflated
components make the epigraph corner optimal for the prescribed,
plant-consistent identity-weight MPC quadratic.

\begin{lemma}[Nonnegative epigraph decomposition]
\label{lem:epigraph-decomposition}
Let $\phi:\R^d\to\R^d$ be a rational continuous PWA map with $\phi(0)=0$.
There exist effectively computable rational convex PWA functions
$p_i,q_i:\R^d\to\R$, $i=1,\ldots,d$, such that
\[
  \phi_i(x)=p_i(x)-q_i(x),\qquad
  p_i(0)=q_i(0)=0,
\]
and
\[
  p_i(x)\ge |\phi_i(x)|,
  \qquad q_i(x)\ge |\phi_i(x)|
  \qquad\text{for every }x\in\R^d.
\]
Each $p_i$ and $q_i$ can be represented as the maximum of finitely many
rational affine functions.
\end{lemma}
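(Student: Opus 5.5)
Fix a coordinate $i$ and work with $\phi_i$ alone. The plan is to take any difference-of-convex decomposition and then add one common convex function to both parts, large enough to force the two domination inequalities.

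\textbf{Step 1: a max-minus-max decomposition.} By the constructive decomposition of continuous PWA functions \cite{KripfganzSchulze1987,HempelEtAl2013}, there are finitely many rational affine functions $a_j,b_l$, all computable from the partition and the affine pieces, with
\[
  \phi_i(x)=\max_j a_j(x)-\max_l b_l(x)\eqqcolon \tilde p_i(x)-\tilde q_i(x).
\]
One concrete route is to write $\phi_i$ on each polyhedral region as a lattice (max--min) formula and then expand. Both $\tilde p_i$ and $\tilde q_i$ are convex PWA. Their difference vanishes at $0$, so $\tilde p_i(0)=\tilde q_i(0)\eqqcolon c$ with $c$ rational. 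Subtracting $c$ from every $a_j$ and every $b_l$ normalizes both functions to vanish at the origin.

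\textbf{Step 2: inflate by a common convex function.} Set
\[
  r(x)=\lambda\|x\|_1=\lambda\max_{\sigma\in\{\pm1\}^d}\sigma^{\mathsf T}x,
  \qquad p_i=\tilde p_i+r,\qquad q_i=\tilde q_i+r,
\]
for a rational $\lambda>0$ still to be chosen. Then $p_i-q_i=\phi_i$ and $p_i(0)=q_i(0)=0$. Each sum is again a maximum of finitely many rational affine functions, obtained by taking all pairwise sums of the pieces.

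\textbf{Step 3: choose $\lambda$ (the main obstacle).} The requirement is $p_i\ge|\phi_i|$ and $q_i\ge|\phi_i|$ everywhere. Every function involved is piecewise linear in the relevant sense and vanishes at the origin, and this is what lets a linear lower bound work.

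First bound the normalized $\tilde q_i$ from below. Take a linear function $\ell$ in its subdifferential at $0$, for instance the gradient of an active piece $b_l$. Convexity then gives
\[
  \tilde q_i(x)\ge \ell^{\mathsf T}x\ge-\|\ell\|_\infty\|x\|_1 .
\]
The same bound, with a suitable $\ell'$, holds for $\tilde p_i$.

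Next bound $|\phi_i|$ from above. Since $\phi_i$ is continuous PWA with finitely many pieces and $\phi_i(0)=0$, it is globally Lipschitz, so $|\phi_i(x)|\le L\|x\|_1$. Here $L$ can be taken as the maximum $\ell_\infty$ norm of the piece gradients, which is rational and computable. The Lipschitz bound follows by integrating along the segment from $0$ to $x$, which crosses finitely many regions.

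With these two bounds, choosing $\lambda=L+\max(\|\ell\|_\infty,\|\ell'\|_\infty)$ gives
\[
  q_i(x)\ge(\lambda-\|\ell\|_\infty)\|x\|_1\ge L\|x\|_1\ge|\phi_i(x)|,
\]
and the same chain of inequalities holds for $p_i$.

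Effectiveness holds throughout because every quantity ($c$, $\ell$, $\ell'$, $L$, $\lambda$) is obtained from the finite rational description by exact rational operations. The delicate point I expect is Step 1: making the lattice representation constructive with rational data. For that I would simply cite \cite{KripfganzSchulze1987,HempelEtAl2013} as giving effective rational max-affine decompositions.
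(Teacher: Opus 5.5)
Your proposal is correct and follows essentially the same route as the paper: take a constructive difference-of-convex decomposition, normalize both parts to vanish at the origin, and add a common multiple of a polyhedral norm large enough to dominate $|\phi_i|$. The differences are only cosmetic. You inflate by $\lambda\|x\|_1$ and use a subgradient at $0$ together with the Lipschitz constant of $\phi_i$'s own affine pieces. The paper instead inflates by $K_i\|x\|_\infty$ and takes both the lower bound and the bound on $|\phi_i|$ from the slope one-norms of the max-affine parts of the decomposition.
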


\begin{proof}
By the constructive PWA difference-of-convex decomposition
\cite[Lemma~4]{HempelEtAl2013}, write
\[
  \phi_i(x)=a_i(x)-b_i(x),
\]
where $a_i$ and $b_i$ are rational convex PWA functions.  Since
$\phi_i(0)=0$, their values at the origin agree; denote the common value by
$c_i$.  Write $a_i$ and $b_i$ as maxima of finitely many affine functions and
let $L_{a_i}$ and $L_{b_i}$ be the largest one-norms of their slopes.  The
standard max-affine Lipschitz bound, applied relative to the origin, gives
\[
  |a_i(x)-c_i|\le L_{a_i}\|x\|_\infty,
  \qquad
  |b_i(x)-c_i|\le L_{b_i}\|x\|_\infty,
\]
and consequently
$|\phi_i(x)|\le(L_{a_i}+L_{b_i})\|x\|_\infty$.
Set $K_i=2(L_{a_i}+L_{b_i})$ and define
\begin{align*}
  p_i(x)&=a_i(x)-c_i+K_i\|x\|_\infty,\\
  q_i(x)&=b_i(x)-c_i+K_i\|x\|_\infty.
\end{align*}
These functions are convex, rational, PWA, vanish at the origin, and have
difference $\phi_i$.  Moreover,
\begin{align*}
 p_i(x)&\ge (L_{a_i}+2L_{b_i})\|x\|_\infty
          \ge |\phi_i(x)|,\\
 q_i(x)&\ge (2L_{a_i}+L_{b_i})\|x\|_\infty
          \ge |\phi_i(x)|.
\end{align*}
Finally, $\|x\|_\infty$ and sums of finite rational max-affine functions are
again finite rational max-affine (by enumerating signed coordinates and
pairwise sums).  Hence all operations are effective, although the piece count
may grow.
\end{proof}

For the functions supplied by \cref{lem:epigraph-decomposition}, choose rational
affine representations with finite index sets $\mathcal J_i^p$ and
$\mathcal J_i^q$,
\begin{align*}
 p_i(x)&=\max_{j\in\mathcal J_i^p}
       \{(a_{ij}^p)^{\mathsf T}x+b_{ij}^p\},\\
 q_i(x)&=\max_{j\in\mathcal J_i^q}
       \{(a_{ij}^q)^{\mathsf T}x+b_{ij}^q\}.
\end{align*}

\begin{proposition}[Exact epigraph realization]
\label{prop:fixed-dimensional-realization}
Let $\phi:\R^d\to\R^d$ be a rational continuous PWA map satisfying
$\phi(0)=0$.  Consider the linear plant with state $x_k\in\R^d$ and input
$u_k=(v_k,w_k)\in\R^{2d}$,
\begin{equation}
  x_{k+1}=v_k-w_k
  =\begin{bmatrix}I&-I\end{bmatrix}u_k.
  \label{eq:fixed-dimensional-plant}
\end{equation}
At each state solve the horizon-one MPC problem
\begin{subequations}
\label{eq:fixed-dimensional-mpc}
\begin{align}
 \min_{v_k,w_k,x_{k+1}}\quad&
  \|x_k\|^2+\|v_k\|^2+\|w_k\|^2+\|x_{k+1}\|^2,\\
 \text{subject to}\quad&
  x_{k+1}=v_k-w_k,\\
 &v_{k,i}\ge (a_{ij}^p)^{\mathsf T}x_k+b_{ij}^p,
   \quad j\in\mathcal J_i^p,\\
 &w_{k,i}\ge (a_{ij}^q)^{\mathsf T}x_k+b_{ij}^q,
   \quad j\in\mathcal J_i^q.
\end{align}
\end{subequations}
The last two families of inequalities are imposed for $i=1,\ldots,d$.
This problem is feasible for every state, has a unique optimizer, and generates
the exact closed-loop map $x_{k+1}=\phi(x_k)$.  The origin,
$(x_k,u_k)=(0,0)$, is feasible.
\end{proposition}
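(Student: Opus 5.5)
The plan is to eliminate the successor state and show that the quadratic program separates into $d$ independent two-variable problems, each solved at the corner $(v_{k,i},w_{k,i})=(p_i(x_k),q_i(x_k))$ of its feasible quadrant.

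First, the constraint families say exactly that $v_{k,i}\ge p_i(x_k)$ and $w_{k,i}\ge q_i(x_k)$. This holds because $p_i$ and $q_i$ are the maxima of the listed affine functions. So the feasible set for $u_k=(v_k,w_k)$ is a translated nonnegative orthant in $\R^{2d}$, and it is nonempty for every $x_k$; for instance, take $v_k=p(x_k)$ and $w_k=q(x_k)$. At $x_k=0$ we have $p_i(0)=q_i(0)=0$ by \cref{lem:epigraph-decomposition}, so $(x_k,u_k)=(0,0)$ is feasible.

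Next, substitute $x_{k+1}=v_k-w_k$ and drop the constant $\|x_k\|^2$. The remaining objective is $\sum_i g(v_{k,i},w_{k,i})$ with
\[
  g(a,b)=a^2+b^2+(a-b)^2 .
\]
The constraints also separate by coordinate. The Hessian of $g$ is $\begin{bmatrix}4&-2\\-2&4\end{bmatrix}\succ0$, so each scalar problem, and hence the whole QP, is strictly convex and has a unique optimizer.

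The key step, and the only real obstacle, is to verify that the corner $(a,b)=(p_i,q_i)$ (evaluated at $x_k$) satisfies the KKT conditions for $\min g$ over $a\ge p_i$, $b\ge q_i$. The gradient there is $(4p_i-2q_i,\;4q_i-2p_i)$. Both active constraints are lower bounds, so optimality holds iff both components are nonnegative, i.e.\ iff $q_i\le 2p_i$ and $p_i\le 2q_i$. This is where the inflation in \cref{lem:epigraph-decomposition} enters. Since $q_i=p_i-\phi_i$ and $p_i\ge|\phi_i|$, we get
\[
  q_i\le p_i+|\phi_i|\le 2p_i .
\]
Symmetrically, $p_i=q_i+\phi_i\le 2q_i$ using $q_i\ge|\phi_i|$. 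The nonnegative multipliers are then these gradient components. Because the problem is convex, KKT is sufficient, and together with strict convexity the corner is the unique optimizer.

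Finally, the optimizer is $v_k^\star=p(x_k)$ and $w_k^\star=q(x_k)$. The implemented successor is therefore $x_{k+1}=p(x_k)-q(x_k)=\phi(x_k)$, which is the claimed exact closed-loop map.
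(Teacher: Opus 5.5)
Your proof is correct and matches the paper's argument. You rewrite the constraints as the translated orthant $v_k\ge p(x_k)$, $w_k\ge q(x_k)$, use $p_i,q_i\ge|\phi_i|$ to show the gradient at the corner $(p,q)$ is componentwise nonnegative, and get uniqueness from the positive definite Hessian. The only differences are cosmetic: you separate coordinates and use KKT multipliers, while the paper applies the first-order variational condition over the whole orthant.
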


\begin{proof}
For fixed $x_k$, the inequalities in \eqref{eq:fixed-dimensional-mpc} are
equivalent to
\[
  v_{k,i}\ge p_i(x_k),\qquad w_{k,i}\ge q_i(x_k).
\]
Thus $(v_k,w_k)=(p(x_k),q(x_k))$ is feasible.  After eliminating the successor
state and omitting the constant current-state term, the objective is
\begin{equation*}
  J(v_k,w_k)=\|v_k\|^2+\|w_k\|^2+\|v_k-w_k\|^2.
\end{equation*}
For brevity, fix $x_k$ and write $p=p(x_k)$ and $q=q(x_k)$.  The feasible set
for $(v_k,w_k)$ is the translated nonnegative orthant
\[
  \mathcal F(x_k)=\{(p+\alpha,q+\beta):\alpha,\beta\in\R_+^d\}.
\]
The gradients of the reduced objective are
\[
  \nabla_v J(v,w)=2(2v-w),
  \qquad
  \nabla_w J(v,w)=2(2w-v).
\]
At the lower epigraph corner $(p,q)$, the corresponding gradient components
satisfy
\begin{align*}
  2p_i-q_i&=p_i+(p_i-q_i)=p_i+\phi_i(x_k)\ge0,\\
  2q_i-p_i&=q_i-(p_i-q_i)=q_i-\phi_i(x_k)\ge0,
\end{align*}
where the inequalities follow from
$p_i(x_k),q_i(x_k)\ge |\phi_i(x_k)|$.  Consequently, for every feasible
$(v,w)=(p+\alpha,q+\beta)$,
\begin{align*}
  \nabla_{(v,w)} J(p,q)^{\mathsf T}
  \begin{bmatrix}v-p\\w-q\end{bmatrix}
  &=2\sum_{i=1}^d(2p_i-q_i)\alpha_i\\
  &\quad+2\sum_{i=1}^d(2q_i-p_i)\beta_i
    \ge0.
\end{align*}
This is the first-order optimality condition for minimizing the differentiable
convex function $J$ over the convex set $\mathcal F(x_k)$, so $(p,q)$ is a
global optimizer.  Moreover, $J$ is strictly convex: its Hessian is
\[
  \nabla^2J=
  \begin{bmatrix}4I&-2I\\-2I&4I\end{bmatrix}\succ0.
\]
Hence the optimizer is unique.  Substitution into the plant gives
\[
  x_{k+1}=p(x_k)-q(x_k)=\phi(x_k).
\]
Finally, $p_i(0)=q_i(0)=0$ implies that every affine piece in each max-affine
representation has value at most zero at the origin.  Hence $(x_k,u_k)=(0,0)$
satisfies every stage inequality.
\end{proof}

\begin{theorem}[Fixed-dimensional MPC undecidability]
\label{thm:fixed-dimensional-mpc}
The global asymptotic-stability decision problem is undecidable already for
rational horizon-one linear MPC with fixed state dimension $n_x=3$, fixed input
dimension $n_u=6$, and weights $Q=I_3$, $R=I_6$, and $P=I_3$.  The quadratic
program has a unique optimizer and is feasible for every state; in particular,
recursive feasibility is automatic.  Only rational mixed state-input
inequalities are required, and their number is part of the instance.
\end{theorem}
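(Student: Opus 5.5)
The plan is a computable many-one reduction from the source problem of \cref{thm:fixed-pwa-source} to MPC stability, with \cref{prop:fixed-dimensional-realization} at $d=3$ as the realization step. An instance of the source problem is a rational continuous PWA map $\phi:\R^3\to\R^3$ with $\phi(0)=0$, given by a finite rational polyhedral partition and rational affine pieces. From this description I will compute the functions $p_i,q_i$ of \cref{lem:epigraph-decomposition}, $i=1,2,3$. I will then list their max-affine representations, that is, the index sets $\mathcal J_i^p,\mathcal J_i^q$ and the rational data $a_{ij}^p,b_{ij}^p,a_{ij}^q,b_{ij}^q$. Finally I will write down the MPC instance \eqref{eq:fixed-dimensional-mpc}: state dimension $n_x=3$, input $u_k=(v_k,w_k)\in\R^6$, plant \eqref{eq:fixed-dimensional-plant} with $F=0$ and $G=[\,I_3\ \ -I_3\,]$, horizon one, weights $Q=I_3$, $R=I_6$, $P=I_3$. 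The only inequalities are the mixed state-input constraints $v_{k,i}-(a_{ij}^p)^{\mathsf T}x_k\ge b_{ij}^p$ and $w_{k,i}-(a_{ij}^q)^{\mathsf T}x_k\ge b_{ij}^q$. These are polyhedral constraints on $(x_{0|k},u_{0|k})$, so they fit the set $\mathcal Z$ of \eqref{eq:standard-mpc}. There is no terminal constraint, and $\mathcal Z$ contains $(0,0)$ by the last statement of \cref{prop:fixed-dimensional-realization}.

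Second, I will justify effectiveness. The decomposition of \cite[Lemma~4]{HempelEtAl2013} is constructive. The Lipschitz constants $L_{a_i},L_{b_i}$ are maxima of one-norms of finitely many rational slopes, so they are rational and computable. The terms $K_i\|x\|_\infty$ and the sums are converted to explicit max-affine form by enumerating signed coordinates and pairwise sums, as noted in the proof of \cref{lem:epigraph-decomposition}. The whole map $\phi\mapsto\mathcal I(\phi)$ is therefore computable. The dimensions $3$ and $6$ are fixed; only the number of inequalities and their coefficients depend on $\phi$.

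Third, I will establish correctness. By \cref{prop:fixed-dimensional-realization}, for every $x_k\in\R^3$ the QP is feasible, strictly convex and uniquely solvable, and the closed loop is exactly $x_{k+1}=\phi(x_k)$. Hence $\mathcal X_1=\R^3$, so asymptotic stability relative to $\mathcal X_1$ coincides with global asymptotic stability of $x_{k+1}=\phi(x_k)$. Recursive feasibility is trivial because the feasible set is the whole space. The instance is therefore a yes-instance exactly when $\phi$ is. A decision procedure for the MPC class would then decide the problem of \cref{thm:fixed-pwa-source}, which is impossible.

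The main obstacle is the legitimacy of \cref{thm:fixed-pwa-source} in the form used. I need that the hard instances of \cite[Theorem~7.1]{BlondelEtAl2001} are rational continuous PWA maps on $\R^3$ given by finite rational polyhedral descriptions, and that the restriction to maps with $\phi(0)=0$ is harmless. For the restriction, I will use the remark preceding the section: $\phi(0)=0$ is exactly checkable. A preprocessing step therefore maps every instance with $\phi(0)\neq0$ to a fixed unstable MPC instance, for example the construction above applied to $\phi(x)=2x$. Every other instance is handled by the reduction described, which keeps the reduction total and computable.
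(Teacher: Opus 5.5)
Your proposal is correct and follows the paper's proof: a computable many-one reduction from \cref{thm:fixed-pwa-source}, using \cref{lem:epigraph-decomposition} and \cref{prop:fixed-dimensional-realization} with $d=3$, so that the closed loop is exactly $x_{k+1}=\phi(x_k)$ on $\mathcal X_1=\R^3$. Your additional effectiveness details and the preprocessing for $\phi(0)\neq0$ are harmless elaborations; the latter is redundant, since \cref{thm:fixed-pwa-source} is already stated for maps fixing the origin.
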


\begin{proof}
Given an instance $\phi$ of \cref{thm:fixed-pwa-source}, apply the construction of \cref{lem:epigraph-decomposition} and form
\eqref{eq:fixed-dimensional-mpc} with $d=3$.  By
\cref{prop:fixed-dimensional-realization}, its closed-loop map is exactly
$x_{k+1}=\phi(x_k)$ on $\R^3$.  Thus the MPC closed loop is globally
asymptotically stable if and only if the source PWA system is.  A decision
algorithm for this fixed-dimensional MPC class would therefore contradict
\cref{thm:fixed-pwa-source}.
\end{proof}

\section{Summary and conclusions}

We have shown that deciding global asymptotic stability for constrained linear MPC is undecidable.  Three exact realizations of saturated linear dynamics establish the result separately for predicted-state, input, and quadratically softened input constraints, already with horizon one, fixed weights, unique optimizers, and global feasibility.  A predicted-state variant also gives undecidability on the compact invariant feasible set $[-1,1]^n$.  A fourth, epigraph-based realization of rational continuous PWA dynamics shows that undecidability persists when both dimensions are fixed to $n_x=3$ and $n_u=6$, again with horizon one, identity weights, a unique optimizer, and global feasibility.  Thus, in the subclasses considered here, the obstruction is already present at horizon one and cannot be attributed to any one of the simple constraint types studied above, to growing state or input dimension, to optimizer nonuniqueness, or to a failure of global or recursive feasibility.

This impossibility concerns a universal decision algorithm; it does not preclude stabilizing controller design or the verification of particular controllers through sufficient conditions.  Standard terminal ingredients remain applicable, while SOS/KKT, MILP, invariant-set, and explicit piecewise-affine methods can provide such rigorous certificates \cite{KordaJones2017,SimonLofberg2016}.  There are also decidable subclasses: unconstrained linear-quadratic MPC reduces to an eigenvalue test, while stability also can follow by construction under standard terminal-set, terminal-controller, and terminal-cost decrease conditions.  \emph{Outside such subclasses, a general method must permit an inconclusive answer, restrict itself to sufficient conditions, or fail to terminate on some instances}.

Finally, the fixed-dimensional construction uses six inputs and general mixed state-input inequalities.  Whether undecidability persists for a few-state plant with scalar input, or under a fixed simple constraint structure remains open.

\section*{Acknowledgement}
The author used OpenAI GPT 5.6 Sol for improving constructions in Sections III--IV. OpenAI Codex GPT 5.6 Sol assisted with partially drafting and proof-reading the manuscript. The author independently verified all AI-generated content and takes full responsibility for the article.

\bibliographystyle{IEEEtran}
\bibliography{references}

\end{document}